\newcommand{\inv}{^{-1}}
\newtheorem{theorem}{Theorem}[section]
\newtheorem{lemma}[theorem]{Lemma}
\newtheorem{corollary}[theorem]{Corollary}
\newtheorem{problem}{Problem}
\newtheorem{defin}{Definition}
\newtheorem{exa}{Example}
\newcommand{\C}{\mathbb{C}}						
\newcommand{\matrices}[1]{\mathcal{M}\left( #1 \right)}		
\newcommand{\Hinfty}{H^\infty}								
\newcommand{\error}{e}										
\newcommand{\control}{u}									
\newcommand{\measurement}{y}								
\newcommand{\reference}{\measurement_r}						
\newcommand{\disturbance}{d}								
\newcommand{\Plant}{P}										
\newcommand{\Cont}{C}										
\newcommand{\Gen}{\Theta}									
\newcommand{\gen}{\theta}									
\newcommand{\Closedloop}[1]{H( #1 )}						
\newcommand{\num}{N}										
\newcommand{\den}{D}										
\newcommand{\lnum}{\widetilde{\num}}						
\newcommand{\lden}{\widetilde{\den}}						
\newcommand{\outd}{n}										
\newcommand{\stable}{\mathbf{R}}							
\newcommand{\fractions}[1]{\mathbf{F}} 
\newsavebox{\accentbox}
\begin{document}

\title[A Reformulation of the Internal Model Principle]{Robust Regulation of MIMO systems: A Reformulation of the Internal Model Principle}


\author{Petteri Laakkonen} 

\address{Laboratory of Mathematics, Tampre University of Technology, PO Box 553, FI-33101 Tampere, Finland (e-mail: petteri.laakkonen@tut.fi).}
\maketitle

\begin{abstract}                
The internal model principle is a fundamental result stating a necessary and sufficient condition for a stabilizing controller to be robustly regulating. Its classical formulation is given in terms of coprime factorizations and the largest invariant factor of the signal generator which sets unnecessary restrictions for the theory and its applicability. In this article, the internal model principle is formulated using a general factorization approach and the generators of the fractional ideals generated by the elements of the signal generator. The proposed results are related to the classical ones.
\end{abstract}

\keywords{
Algebraic systems theory, Factorization approach, MIMO, Output regulation, Robust control
}


\section{Introduction}

The control configuration studied is given in Figure \ref{fig:Closedloop}. The robust regulation problem studied in this article is as follows. Let $\stable$ be an integral domain, and $\fractions{\stable}$ the field of fractions of $\stable$. The plant $\Plant$ and the controller $\Cont$ are matrices over $\fractions{\stable}$ and the reference signal $\reference$ is generated by a signal generator $\Gen_r$ that is a matrix over $\fractions{\stable}$, i.e. $\reference = \Gen_r \measurement_0$ where $\measurement_0$ is a vector over $\stable$. The robust regulation problem aims at finding a controller $\Cont$ such that despite the disturbance signal $d$ and internal perturbations of the plant $\Plant$ the error $\error=\reference+\measurement$ is stable, i.e. a vector over $\stable$. Here the actual reference signal to be tracked is $-\reference$, but the sign convention $+$ is convenient because of the symmetry (see Theorem \ref{thm:Stability}).

\begin{figure}[ht]
\centering
\begin{overpic}[scale=0.75]{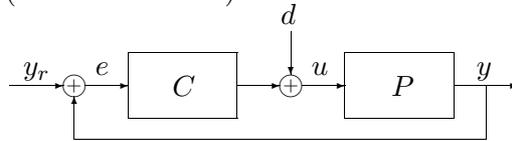}
\put(3,13){$\reference$}
\put(17,13){$\error$}
\put(32,9){$\Cont$}
\put(59,13){$\control$}
\put(53,23){$\disturbance$}
\put(74.5,9){$\Plant$}
\put(91,13){$\measurement$}
\end{overpic}
\caption{The control configuration.}
\label{fig:Closedloop}
\end{figure}

In this paper, a reformulation of the famous internal model principle of robustly regulating controllers by \cite{FrancisWonham1975a} is given. The internal model principle is a necessary and sufficient condition for a stabilizing controller to solve the robust regulation problem, and it states that the instability generated by $\Gen_r$ must be built into every element of $\Cont$. The understanding of this principle leads to internal model based robust controller design techniques studied for example by \cite{HamalainenPohjolainen2000} and \cite{RebarberWeiss2003}.

If $\stable_0$ is the set of all rational functions with complex coefficients that are bounded at infinity and whose poles all have negative real parts, then the controller has a right coprime factorization $\Cont=ND\inv$, i.e. $N$ and $D$ are matrices over $\stable_0$ such that there exist matrices $X$ and $Y$ over $\stable_0$ satisfying the equation $XN+YD=I$. In addition, the signal generator has a left coprime factorization $\Gen_r=D_r\inv N_r$. Let $\alpha\in\stable_0$ be the largest invariant factor of $D_r$. The classical frequency domain formulation of the internal model principle given by \cite{Vidyasagar} states that if $\Cont$ stabilizes $\Plant$, then $\Cont$ solves the robust regulation problem if and only if the elements of $\alpha\inv D$ are in $\stable_0$. The instability of $\Gen_r$ is characterized by the unstable poles, i.e. the poles in the right half plane $\{s\in\C\, |\, \mathrm{Re}(s)\geq 0\}$. The unstable poles are just the zeros of $\alpha$, so the internal model principle forces these poles into every element of the robustly regulating controller.

Frequency domain formulations of the internal model principle for rings that are suitable for infinite dimensional systems are given by \cite{YamamotoHara1988} for pseudo-rational functions, and by \cite{LaakkonenPohjolainen2015} for a stability type that corresponds to polynomial stability in the time domain. A step towards more general robust regulation theory that use the fractional representation approach was taken by \cite{LaakkonenQuadrat2015}. Laakkonen and Quadrat studied the robust regulation of single-input single-output (SISO) systems using fractional ideals, and gave a simple formulation of the internal model principle.

The main result of this paper given by Theorem \ref{thm:IMP} is a reformulation of the internal model principle in terms of the elements of the signal generator and the controller. In its proof, the fractional representations approach presented by \cite{Quadrat2006} is used instead of coprime factorizations. Thus, the internal model principle is extended to integral domains that are not Bezout domains. The main result is the multi-input multi-output (MIMO) extension of the internal model principle of \cite{LaakkonenQuadrat2015}. Corollary \ref{cor:IMP2} shows that the internal model of the signal generator can be understood in terms of the fractional ideal generated by the elements of the signal generator. Finally, Theorem \ref{thm:IMPClassic} shows that in Bezout domains the two formulations are equivalent.


\section{Notations, Preliminary Results and the Problem Formulation}\label{sec:Preliminaries}

A matrix $M$ with elements $\gen_{ij}$ on the $i$th row and $j$th column is denoted by $M=(\gen_{ij})$. We denote the set of all matrices with elements in a set $S$ by $\matrices{S}$ and the set of all $n\times m$ matrices by $S^{n\times m}$. We choose the set of stable elements to be a commutative integral domain $\stable$ that has a unit element. The field of fractions of $\stable$ is denoted by $\fractions{\stable}$. An $\stable$-module $f_1 \stable+\cdots+f_n\stable$, where $f_1,\ldots,f_k\in\fractions{\stable}$, is denoted by $\langle f_1,\ldots,f_k\rangle$ or $\langle f_i\, |\, i=1,\ldots, k\rangle$.

\begin{defin}\begin{enumerate}
\item An $\stable$-submodule $I$ of $\fractions{\stable}$ is called \emph{a fractional ideal} if there exists $0\neq a\in\stable$ such that $a I\subseteq\stable$.
\item A fractional ideal $I$ is \emph{finitely generated} if $I=\langle f_1,\ldots,f_k\rangle$ for some $f_1,\ldots,f_k\in\fractions{\stable}$ and it is
\emph{principal} if it is generated by a single element, i.e. $I=\langle f\rangle$ for some $f\in\fractions{\stable}$.
\end{enumerate}

\end{defin}

The plant and the controller are matrices over $\fractions{\stable}$. It follows that the closed loop of Figure \ref{fig:Closedloop} has a matrix representation as well.
\begin{defin}
\begin{enumerate}
\item A matrix or a vector $H\in\matrices{\fractions{\stable}}$ is \emph{stable} if $H\in\matrices{\stable}$, and otherwise it is unstable.

\item A controller $\Cont\in\fractions{\stable}^{m\times n}$ \emph{stabilizes} $\Plant\in\fractions{\stable}^{n\times m}$ if the closed loop system of Figure \ref{fig:Closedloop} from $(\reference,\disturbance)$ to $(\error,\control)$ given by
\begin{align*}
\Closedloop{\Plant,\Cont} & :=
\begin{bmatrix}
\left(I-\Plant\Cont\right)\inv  & \left(I-\Plant\Cont\right)\inv \Plant\\
\Cont\left(I-\Plant\Cont\right)\inv  &\left(I-\Cont\Plant\right)\inv 
\end{bmatrix}
\end{align*}
is stable.

\end{enumerate}
\end{defin}

\begin{defin}
\begin{enumerate}
\item The representation $\Gen=\num\den\inv $ ($\Gen=\lden\inv \lnum$) is called a right (left) factorization of $\Gen$ if $\num,\den\in\matrices{\stable}$ ($\lnum,\lden \in\matrices{\stable}$) and $\det(\den)\neq 0$ ($\det(\lden)\neq 0$).
\item A factorization $\Gen=\num\den\inv $ ($\Gen=\lden\inv \lnum$) is called \emph{a right (left) coprime factorization} of $\Gen$ if there exist $X,Y\in\matrices{\stable}$ ($\widetilde{X},\widetilde{Y}\in\matrices{\stable}$) such that
$$
X\num+Y\den=I \qquad (\lnum \widetilde{X}+ \lden\widetilde{Y}=I).
$$
\end{enumerate}
\end{defin}

Theory developed in this article is based on the stability results of \cite{Quadrat2006}. The first item of the next theorem is Theorem 3 of \cite{Quadrat2006} and gives a parametrization of all stabilizing controllers. The second item is obtained from the first one by changing the roles of $\Plant$ and $\Cont$ by the symmetry of the closed loop control configuration of Figure \ref{fig:Closedloop}.
\begin{theorem}
\label{thm:Stability}
Let $\Cont$ stabilize $\Plant$.
\begin{enumerate}[1.]
\item Denote
\begin{align*}
\widetilde{L} :=\begin{bmatrix}
-\left(I-\Cont\Plant\right)\inv \Cont\;\; & \left(I-\Cont\Plant\right)\inv
\end{bmatrix},
\end{align*}
and
\begin{align*}
L :=\begin{bmatrix}
\left(I-\Plant\Cont\right)\inv \\
\Cont\left(I-\Plant\Cont\right)\inv 
\end{bmatrix}.
\end{align*}
All stabilizing controllers of $\Plant$ are parametrized by
\begin{subequations}
\begin{align}\label{eqn:AllStabilizingControllers}
\Cont(W) & =\left(\Cont\left(I-\Plant\Cont\right)\inv +\widetilde{L}WL\right)\notag\\
& \qquad \times\left(\left(I-\Plant\Cont\right)\inv +\Plant \widetilde{L}WL\right)\inv \\
& =\left(\left(I-\Cont\Plant\right)\inv +\widetilde{L}WL\Plant\right)\inv \notag\\
&\qquad \times\left(\left(I-\Cont\Plant\right)\inv +\widetilde{L}WL\right)
\end{align}
\end{subequations}
where $W$ is a stable matrix with suitable dimensions such that it satisfies $\det\left(\left(I-\Plant\Cont\right)\inv +\Plant \widetilde{L}WL\right)\neq 0$ and $\det\left(\left(I-\Cont\Plant\right)\inv +\widetilde{L}WL\Plant\right)\neq 0$.

\item Denote
\begin{align}\label{eq:ErrorTM}
\widetilde{M} :=\begin{bmatrix}
-\left(I-\Plant\Cont\right)\inv \Plant\;\; & \left(I-\Plant\Cont\right)\inv
\end{bmatrix},
\end{align}
and
\begin{align}\label{eq:ErrorTM0}
M :=\begin{bmatrix}
\left(I-\Cont\Plant\right)\inv \\
\Plant\left(I-\Cont\Plant\right)\inv 
\end{bmatrix}.
\end{align}
All plants that $\Cont$ stabilizes are parametrized by
\begin{subequations}
\begin{align}\label{eqn:AllStabilizedPlants}
\Plant(X) & =\left(\Plant\left(I-\Cont\Plant\right)\inv +\widetilde{M}XM\right)\notag\\
&\qquad \times\left(\left(I-\Cont\Plant\right)\inv +\Cont \widetilde{M}XM\right)\inv \\
& =\left(\left(I-\Plant\Cont\right)\inv +\widetilde{M}XM\Cont\right)\inv \notag\\
&\qquad \times\left(\left(I-\Plant\Cont\right)\inv +\widetilde{M}XM\right)
\end{align}
\end{subequations}
where $X$ is a stable matrix with suitable dimensions such that it satisfies $\det\left(\left(I-\Cont\Plant\right)\inv +\Cont \widetilde{L}XL\right)\neq 0$ and $\det\left(\left(I-\Plant\Cont\right)\inv +\widetilde{L}XL\Cont\right)\neq 0$.
\end{enumerate}
\end{theorem}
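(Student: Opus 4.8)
The plan is to treat the two items very asymmetrically in effort but symmetrically in content. Item~1 is nothing but Theorem~3 of \cite{Quadrat2006} restated in the present notation, so for it I would simply invoke that reference; no independent argument is needed. All the work therefore goes into deriving item~2 from item~1, and the idea is to exploit the built-in duality of the loop in Figure~\ref{fig:Closedloop} between the plant and the controller.

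First I would make the symmetry precise. Using the push-through identities $\Cont(I-\Plant\Cont)\inv=(I-\Cont\Plant)\inv\Cont$ and $\Plant(I-\Cont\Plant)\inv=(I-\Plant\Cont)\inv\Plant$, both valid over $\fractions{\stable}$ whenever the inverses exist because $(I-\Cont\Plant)\Cont=\Cont(I-\Plant\Cont)$ and similarly for $\Plant$, a direct block computation gives
\[
\Closedloop{\Cont,\Plant}=\Pi\,\Closedloop{\Plant,\Cont}\,\Pi,\qquad \Pi:=\begin{bmatrix}0&I\\ I&0\end{bmatrix},\quad \Pi\inv=\Pi.
\]
Since $\Pi$ is a constant unimodular matrix over $\stable$, the matrix $\Closedloop{\Plant,\Cont}$ is stable if and only if $\Closedloop{\Cont,\Plant}$ is; equivalently, the four rational blocks of $\Closedloop{\Cont,\Plant}$ are, via the push-through identities, literally the same four elements of $\fractions{\stable}$ as those of $\Closedloop{\Plant,\Cont}$, merely permuted. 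Hence the stabilization relation is symmetric: $\Cont$ stabilizes $\Plant$ if and only if $\Plant$ stabilizes $\Cont$. This is the symmetry that the $+$ sign convention was chosen to expose.

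With this in hand, item~2 follows formally. By the symmetry just established, the set of plants stabilized by $\Cont$ coincides with the set of matrices that stabilize $\Cont$. Applying item~1 verbatim but with the roles of $\Plant$ and $\Cont$ interchanged parametrizes precisely this latter set, its hypothesis (that $\Plant$ stabilize $\Cont$) being supplied by the symmetry. It then remains to check that the interchange $\Plant\leftrightarrow\Cont$ carries the data of item~1 onto the data of item~2: the row $\widetilde{L}$ becomes $\widetilde{M}$ of \eqref{eq:ErrorTM}, the column $L$ becomes $M$ of \eqref{eq:ErrorTM0}, the parameter $W$ is renamed $X$, and the two expressions for $\Cont(W)$ in \eqref{eqn:AllStabilizingControllers} turn into the two expressions for $\Plant(X)$ in \eqref{eqn:AllStabilizedPlants}, with the non-degeneracy conditions transforming in the same mechanical way.

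I expect no deep obstacle, since item~1 is imported wholesale and the only real care is bookkeeping. The step most likely to hide an error is the verification of $\Closedloop{\Cont,\Plant}=\Pi\,\Closedloop{\Plant,\Cont}\,\Pi$: one must apply the push-through identities to the off-diagonal blocks in exactly the right direction so that signs and the two inequivalent inverses $(I-\Plant\Cont)\inv$ and $(I-\Cont\Plant)\inv$ land in their correct positions, and one must confirm that the correct symmetry is conjugation by the block permutation $\Pi$ rather than, say, a transpose. This identity is the point on which the entire reduction of item~2 to item~1 rests.
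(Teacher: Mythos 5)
Your proposal is correct and follows essentially the same route as the paper, which likewise imports item~1 directly as Theorem~3 of Quadrat (2006) and obtains item~2 by interchanging the roles of $\Plant$ and $\Cont$ using the symmetry of the closed loop. Your explicit verification of that symmetry via the push-through identities and conjugation by the block permutation $\Pi$ is a correct and welcome elaboration of a step the paper only asserts.
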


We make the standing assumption that all the reference signals are generated by some fixed signal generators $\Gen_r\in\fractions{\stable}^{n\times q}$, i.e. they are of the form $\reference=\Gen_r\measurement_0$ where the vector $\measurement_0\in\stable^{q\times 1}$. In this article we concentrate on the regulation, so we assume that the disturbance signals contain only unstable dynamics that are already present in the signal generator. In other words, we assume that the disturbance signals are of the form $\disturbance=\Gen_d\disturbance_0$ where the vector $\disturbance_0\in\stable^{q\times 1}$ and $\Gen_d=Q\Gen_r\in\fractions{\stable}^{n\times q}$ for some fixed matrix $Q\in\stable^{m\times n}$.

\begin{defin}
\begin{enumerate}
\item We say that a controller $\Cont\in\fractions{\stable}^{m\times n}$ is \emph{regulating} for $\Plant\in\fractions{\stable}^{n\times m}$ if for all $\measurement_0\in\stable^{q\times 1}$
\begin{align*}
\left(I-\Plant\Cont\right)\inv \Gen_r\measurement_0\in \matrices{\stable}.
\end{align*}

\item We say that a controller $\Cont$ is \emph{disturbance rejecting} for $\Plant$ if for all $\disturbance_0\in\stable^{q\times 1}$
\begin{align*}
\left(I-\Plant\Cont\right)\inv \Plant \Gen_d\disturbance_0\in \matrices{\stable},
\end{align*}
\item A controller $\Cont$ \emph{robustly regulates} $\Plant$ if 
\begin{enumerate}[i)]
\item it stabilizes $\Plant$, and
\item regulates every plant it stabilizes.
\end{enumerate}
\item A controller $\Cont$ is \emph{robustly disturbance rejecting} for $\Plant$ if 
\begin{enumerate}[i)]
\item it stabilizes $\Plant$, and
\item is disturbance rejecting for every plant it stabilizes.
\end{enumerate}
\end{enumerate}
\end{defin}

\begin{problem}
We call the problem of finding a controller $\Cont$ that robustly regulates and is robustly disturbance rejecting for a given nominal plant $\Plant$ \emph{the robust regulation problem.}
\end{problem}

\section{The Internal Model Principle}\label{sec:IMP}

The main result of this paper is the formulation of the internal model principle given by the following theorem. It states a necessary and sufficient condition for a stabilizing controller to be robustly regulating. It generalizes Theorem 3.1 of \cite{LaakkonenQuadrat2015} to multi-input multi-output systems. It is a reformulation of the well-known result that all the unstable dynamics produced by the signal generator must be built into the controller as an internal model in order to make it robustly regulating.

\begin{theorem}\label{thm:IMP}
Denote $\Gen_r=(\gen_{ij})$. Controller $\Cont$ solves the robust regulation problem for $\Plant$ if and only if it stabilizes $\Plant$ and for all $1\leq i\leq \outd$ and $1\leq j\leq q$ there exist $A_{ij},B_{ij}\in\matrices{\stable}$ such that 
\begin{align}\label{eqn:RORPSolvability}
\gen_{ij} I=A_{ij}+B_{ij}\Cont.
\end{align}
\end{theorem}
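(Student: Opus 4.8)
The plan is to turn both ``robust regulation'' and ``robust disturbance rejection'' into purely algebraic conditions by running every relevant transfer matrix through the parametrization of Theorem \ref{thm:Stability}(2), and then to match the resulting condition with the factorization \eqref{eqn:RORPSolvability}. First I would record the elementary reduction that, since $\measurement_0$ ranges over all of $\stable^{q\times1}$, the controller regulates a plant $\Plant'$ iff $(I-\Plant'\Cont)\inv\Gen_r\in\matrices{\stable}$ and rejects disturbances iff $(I-\Plant'\Cont)\inv\Plant'\Gen_d\in\matrices{\stable}$; by Theorem \ref{thm:Stability}(2) it suffices to test these on $\Plant'=\Plant(X)$ as $X$ runs over the admissible stable matrices. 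The computational engine is a pair of closed-form identities: writing $N_X:=\Plant(I-\Cont\Plant)\inv+\widetilde{M}XM$ and $S:=(I-\Cont\Plant)\inv+\Cont\widetilde{M}XM$, so that $\Plant(X)=N_XS\inv$, one checks the cancellation $S-\Cont N_X=I$, and feeding this into the push-through identity $(I-UV)\inv=I+U(I-VU)\inv V$ yields
\begin{align*}
(I-\Plant(X)\Cont)\inv=(I-\Plant\Cont)\inv+\widetilde{M}XM\Cont,\qquad (I-\Plant(X)\Cont)\inv\Plant(X)=\Plant(I-\Cont\Plant)\inv+\widetilde{M}XM .
\end{align*}
Thus every perturbed closed-loop map is \emph{affine} in the parameter $X$.

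Substituting these and setting $V:=(I-\Plant\Cont)\inv\Gen_r$, robust regulation becomes the conjunction of the nominal condition $V\in\matrices{\stable}$ with the perturbation condition $\widetilde{M}XM\Cont\Gen_r\in\matrices{\stable}$ for all admissible $X$, and likewise for disturbance rejection with $\Gen_d=Q\Gen_r$ in place of $\Cont\Gen_r$. Here I would exploit the factored forms $\widetilde{M}=(I-\Plant\Cont)\inv[-\Plant\ \ I]$ and $M=\begin{bmatrix}I\\ \Plant\end{bmatrix}(I-\Cont\Plant)\inv$, together with the relation $\Plant\Cont V=V-\Gen_r$, to rewrite the perturbation term as $(I-\Plant\Cont)\inv[-\Plant\ \ I]\,X\begin{bmatrix}\Cont V\\ V-\Gen_r\end{bmatrix}$. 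The essential point this exposes is that $X$ influences the condition only through the plant-weighted combinations $[-\Plant\ \ I]X\begin{bmatrix}I\\ \Plant\end{bmatrix}$, i.e. through the $\stable$-module $\matrices{\stable}+\Plant\matrices{\stable}+\matrices{\stable}\Plant+\Plant\matrices{\stable}\Plant$; it is precisely this appearance of $\Plant$ on both sides, rather than of arbitrary stable matrices, that forces the instability of $\Gen_r$ into $\Cont$. A one-line SISO check confirms that if one mistakenly let this factor range over all of $\matrices{\stable}$ the perturbation term would be automatically stable and one would wrongly conclude that nominal regulation already suffices.

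For sufficiency I would assume \eqref{eqn:RORPSolvability} and substitute $\gen_{ij}I=A_{ij}+B_{ij}\Cont$ column by column into the affine expressions above. The stable blocks $(I-\Plant\Cont)\inv$, $(I-\Plant\Cont)\inv\Plant$, $\Cont(I-\Plant\Cont)\inv$ of $\Closedloop{\Plant,\Cont}$, together with the identity $\Plant\Cont(I-\Plant\Cont)\inv=(I-\Plant\Cont)\inv-I$, let every occurrence of the unstable $\Gen_r$ be traded for $B_{ij}\Cont$, after which all surviving factors are stable; this gives robust regulation and, with the same bookkeeping for $\Gen_d=Q\Gen_r$, robust disturbance rejection. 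I expect this direction to be essentially mechanical once the affine identities are in hand.

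The main obstacle is necessity: extracting explicit $A_{ij},B_{ij}\in\matrices{\stable}$ from the statement ``$\widetilde{M}XM\Cont\Gen_r$ is stable for all admissible $X$''. Choosing the elementary perturbations $X=e_a e_b^{T}$ in each block isolates rank-one conditions of the form ``(a column of a stable closed-loop block)$\times$(a row built from $\Gen_r$ and $\Cont$) is stable'', but no single block yields \eqref{eqn:RORPSolvability}; the difficulty is to assemble these into one factorization. Because $\stable$ is only assumed to be an integral domain, no coprime factorization, gcd, or invariant-factor argument is available, so I would perform this extraction through the fractional-ideal formalism of \cite{Quadrat2006} used in \cite{LaakkonenQuadrat2015}: reinterpret the stability requirements as the statement that each $\gen_{ij}$ lies, modulo $\stable$, in the fractional ideal generated by the entries of the relevant row of $\Cont$, and then clear denominators to produce the matrices $B_{ij}$. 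Verifying that this extraction is valid in the non-Bezout setting, and that the disturbance-rejection conditions impose nothing beyond \eqref{eqn:RORPSolvability}, is where I expect the real work to lie.
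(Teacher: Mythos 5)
Your setup coincides with the paper's own proof: both directions are routed through the affine identity $(I-\Plant(X)\Cont)\inv=(I-\Plant\Cont)\inv+\widetilde{M}XM\Cont$ obtained from Theorem \ref{thm:Stability}(2), your sufficiency argument (substitute $\gen_{ij}I=A_{ij}+B_{ij}\Cont$ and absorb everything into the stable closed-loop blocks, for an arbitrary stabilized plant) is exactly Lemma \ref{lem:IMP1}, and the elementary perturbations $X=e_ae_b^{T}$ placed block by block are exactly the device of Lemma \ref{lem:IMP2}. The problem is that you stop one step short of finishing necessity and then propose to close the remaining distance by a detour that is neither needed nor justified. Follow your own rank-one conditions to their conclusion: taking $X$ supported on its $(2,2)$ block $X_0$, the perturbation term is $\widetilde{M}_1X_0(\widetilde{M}_1-I)\Gen_r$ with $\widetilde{M}_1=(I-\Plant\Cont)\inv$; since $\widetilde{M}_1$, $X_0$ and the nominal error map $V=\widetilde{M}_1\Gen_r$ are all stable, this forces $\widetilde{M}_1X_0\Gen_r\in\matrices{\stable}$ for every stable $X_0$, and $X_0=e_ae_b^{T}$ then shows that every product of an entry of $\widetilde{M}_1$ with an entry of $\Gen_r$ is stable, i.e. $\gen_{ij}(I-\Plant\Cont)\inv\in\matrices{\stable}$. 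The analogous computation with the $(1,2)$ block gives $\gen_{ij}(I-\Plant\Cont)\inv\Plant\in\matrices{\stable}$.

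The missing idea --- and it is the entire content of the necessity half --- is that these two stability statements \emph{already are} the factorization \eqref{eqn:RORPSolvability}: from $\gen_{ij}I=\gen_{ij}(I-\Plant\Cont)\inv(I-\Plant\Cont)$ one reads off $A_{ij}=\gen_{ij}(I-\Plant\Cont)\inv$ and $B_{ij}=-\gen_{ij}(I-\Plant\Cont)\inv\Plant$, both stable by the preceding paragraph. There is no ``assembly problem'' left, and in particular no fractional-ideal argument, no ideal generated by a row of $\Cont$, and no clearing of denominators is required --- which is fortunate, because the extraction you sketch (``each $\gen_{ij}$ lies, modulo $\stable$, in the fractional ideal generated by the entries of the relevant row of $\Cont$, then clear denominators'') is not what the rank-one conditions assert and is not available over a general integral domain. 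As written, the necessity direction of your proposal is a genuine gap; the fix is the one identity above, which is exactly how Lemma \ref{lem:IMP2} concludes.
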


The proof of the theorem is divided into two lemmas. Lemma \ref{lem:IMP1} shows the sufficiency and Lemma \ref{lem:IMP2} the necessity. The proof of the necessity uses only reference signals, so we see that the internal model is required even if there is no disturbance signals. On the other hand, if there is no reference signals, but the disturbance signals contain unstable dynamics the above condition may be too strong (\cite{LaakkonenPohjolainen2015}).

\begin{exa}
For SISO plants, Theorem \ref{thm:IMP} takes the form $\langle \Gen_r\rangle\subseteq \langle 1,\Cont\rangle$, see \cite{LaakkonenQuadrat2015}. The inclusion indicates that the signals generated by the generator can be divided into a stable part and an unstable part generated by the controller.
\end{exa}

\begin{lemma}\label{lem:IMP1}
Denote $\Gen=(\gen_{ij})$ and let $\Cont$ stabilize $\Plant$. The controller $\Cont$ solves the robust regulation problem if for all $1\leq i\leq \outd$ and $1\leq j\leq q$ there exist $A_{ij},B_{ij}\in\matrices{\stable}$ such that 
$ 
\gen_{ij} I=A_{ij}+B_{ij}\Cont.
$
\end{lemma}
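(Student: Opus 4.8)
The plan is to reduce the statement to a single uniform computation that is run once for each plant the controller stabilizes. Solving the robust regulation problem means that $\Cont$ both regulates and rejects disturbances for \emph{every} plant it stabilizes, and $\Cont$ stabilizes $\Plant$ by hypothesis; so I would fix an arbitrary plant $\Plant'$ stabilized by $\Cont$ (Theorem \ref{thm:Stability} parametrizes all such plants, but I only need that the closed loop is stable) and abbreviate $S'=(I-\Plant'\Cont)\inv$ and $\widetilde{S}'=(I-\Cont\Plant')\inv$. The definition of stabilization then supplies four stable matrices for free: $S'$, $S'\Plant'$, $\Cont S'$, and $\widetilde{S}'$ all lie in $\matrices{\stable}$.

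For the regulation part I must show $S'\Gen_r\in\matrices{\stable}$. In the SISO case one simply commutes $S'$ past $\Cont$ and invokes stability of $\Cont S'$; in the MIMO case this move is unavailable, and the key idea is instead to multiply the internal model identity $\gen_{ij}I=A_{ij}+B_{ij}\Cont$ on the right by $S'$. This gives $\gen_{ij}S'=A_{ij}S'+B_{ij}(\Cont S')$, whose right-hand side is a sum of products of stable matrices, so each scalar multiple $\gen_{ij}S'$ is stable. Reassembling by columns, the $j$-th column of $S'\Gen_r$ is $\sum_i(\gen_{ij}S')e_i$ (with $e_i$ the standard basis vectors), a sum of columns of stable matrices; hence $S'\Gen_r\in\matrices{\stable}$, and multiplying by any $\measurement_0\in\stable^{q\times1}$ shows that $\Cont$ regulates $\Plant'$.

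For disturbance rejection I must show $S'\Plant'\Gen_d=S'\Plant'Q\Gen_r\in\matrices{\stable}$, and I would run the same argument with $S'$ replaced by $S'\Plant'Q$: right-multiplying the internal model identity yields $\gen_{ij}S'\Plant'Q=A_{ij}(S'\Plant')Q+B_{ij}(\Cont S'\Plant')Q$. The first term is stable because $S'\Plant'$ and $Q$ are stable, and the second term becomes stable once I establish the identity $\Cont S'\Plant'=\widetilde{S}'-I$, which follows from $\Cont S'=\widetilde{S}'\Cont$ (itself a consequence of the elementary identity $\Cont(I-\Plant'\Cont)=(I-\Cont\Plant')\Cont$) together with $\Cont\Plant'=I-\widetilde{S}'\inv$. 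Thus $\gen_{ij}S'\Plant'Q$ is stable, the column reassembly gives $S'\Plant'Q\Gen_r\in\matrices{\stable}$, and since $\Plant'$ was arbitrary the controller solves the robust regulation problem. The main obstacle throughout is precisely the non-commutativity that makes the SISO proof inapplicable; the device that overcomes it is to multiply the internal model on the right by whichever closed-loop operator pairs the bare $\Cont$ with a stable map — $\Cont S'$ for regulation and $\Cont S'\Plant'=\widetilde{S}'-I$ for disturbance rejection — after which stability is automatic.
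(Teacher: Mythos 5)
Your proposal is correct and follows essentially the same route as the paper: both proofs exploit the fact that the identity $\gen_{ij}I=A_{ij}+B_{ij}\Cont$ involves only the controller (hence holds uniformly over all stabilized plants) and use it to write $\gen_{ij}$ times a closed-loop transfer matrix as $A_{ij}$ and $B_{ij}$ applied to stable closed-loop blocks, then reassemble columns of $\Gen_r$. The only cosmetic differences are that the paper treats reference and disturbance together through the single matrix $\widetilde{M}$ of \eqref{eq:ErrorTM} where you treat them separately, and that you make explicit the identity $\Cont(I-\Plant'\Cont)\inv\Plant'=(I-\Cont\Plant')\inv-I$ that the paper leaves implicit in asserting stability of $\Cont\widetilde{M}$.
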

\begin{proof}
Using the notation \eqref{eq:ErrorTM}, then 
\begin{align*}
\widetilde{M} \begin{bmatrix}
\reference \\ \disturbance
\end{bmatrix}&
=\begin{bmatrix}
\left(I-\Plant\Cont\right)\inv  & \left(I-\Plant\Cont\right)\inv \Plant
\end{bmatrix}
\begin{bmatrix}
\reference \\ \disturbance
\end{bmatrix}
\in\matrices{\stable}
\end{align*}
for all the reference and disturbance signals and any plant $\Plant$ that $\Cont$ regulates is equivalent to $\Cont$ solving the robust regulation problem. Since
\begin{align*}
\reference=\sum_{i,j}\gen_{ij}\measurement_{ij} \qquad\text{and}\qquad \disturbance=\sum_{i,j}\gen_{ij}\disturbance_{ij}
\end{align*}
where $\measurement_{ij}$ and $\disturbance_{ij}$ are arbitrary stable vectors, it is sufficient to show that
\begin{align*}
\gen_{ij}\widetilde{M}
\in\matrices{\stable}
\end{align*}
for all $1\leq i\leq \outd$ and $1\leq j\leq q$. Since $\gen_{ij}I=A_{ij}+B_{ij}\Cont$, it follows that
\begin{align*}
\gen_{ij}\widetilde{M}
 =A_{ij}\widetilde{M}+B_{ij}\widetilde{M}\Cont\in\matrices{\stable}.
\end{align*}
On the right hand side of the equation $\widetilde{M}$ and $\widetilde{M}\Cont$ are stable since $\Cont$ stabilizes $\Plant$.\hfill$\square$
\end{proof}

\begin{lemma}\label{lem:IMP2}
Denote $\Gen=(\gen_{ij})$ and let $\Cont$ stabilize $\Plant$. If the controller $\Cont$ is robustly regulating for $\Plant$ then  for all $1\leq i\leq \outd$ and $1\leq j\leq q$ there exist $A_{ij},B_{ij}\in\matrices{\stable}$ such that 
$ 
\gen_{ij} I=A_{ij}+B_{ij}\Cont.
$
\end{lemma}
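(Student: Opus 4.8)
The plan is to extract the internal model condition by feeding the hypothesis of robust regulation into the parametrization of all stabilized plants from Theorem \ref{thm:Stability}(2), and then to exploit the freedom in its free parameter. Throughout I abbreviate $\Pi:=(I-\Plant\Cont)\inv$ and $\widetilde{\Pi}:=(I-\Cont\Plant)\inv$, both stable because $\Cont$ stabilizes $\Plant$. Regulation of a plant $\Plant'$ by $\Cont$ means $(I-\Plant'\Cont)\inv\Gen_r\measurement_0\in\matrices{\stable}$ for every $\measurement_0\in\stable^{q\times1}$; letting $\measurement_0$ run through the standard basis, this is equivalent to $(I-\Plant'\Cont)\inv\Gen_r\in\matrices{\stable}$. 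Robust regulation therefore asserts this membership simultaneously for every plant $\Plant(X)$ in the family \eqref{eqn:AllStabilizedPlants}, and the goal is to show that this forces $\gen_{ij}I=A_{ij}+B_{ij}\Cont$ for each entry.

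The key computational step is to write the perturbed error operator as an affine function of $X$. Simplifying \eqref{eqn:AllStabilizedPlants} with the push-through identities $\widetilde{\Pi}\Cont=\Cont\Pi$ and $\Plant\Cont\Pi=\Pi-I$, I expect to obtain
\begin{align*}
(I-\Plant(X)\Cont)\inv=\Pi+\widetilde{M}XM\Cont,
\end{align*}
with $\widetilde{M},M$ as in \eqref{eq:ErrorTM}, \eqref{eq:ErrorTM0}. Using $\widetilde{M}=\Pi\begin{bmatrix}-\Plant & I\end{bmatrix}$ and $M\Cont=\begin{bmatrix}I\\\Plant\end{bmatrix}\Cont\Pi$ (the same identities again), this collapses to $(I-\Plant(X)\Cont)\inv=\Pi+\Pi Z\Cont\Pi$, where $Z:=\begin{bmatrix}-\Plant & I\end{bmatrix}X\begin{bmatrix}I\\\Plant\end{bmatrix}$ is stable and, choosing the lower-left block of $X$ freely, ranges over all of $\stable^{\odxid}$. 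Setting $X=0$ recovers the nominal operator, whose regulation $\Pi\Gen_r\in\matrices{\stable}$ is granted; subtracting it, robust regulation becomes the requirement that
\begin{align*}
\Pi Z\Cont\Pi\Gen_r\in\matrices{\stable}\qquad\text{for every }Z\in\stable^{\odxid}.
\end{align*}
Note that $\widetilde{M}XM\Cont$ is exactly the term that Lemma \ref{lem:IMP1} showed to be harmless under the internal model hypothesis, so this step is the expected reversal of the sufficiency argument.

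The remaining task is to convert this single stability-for-all-$Z$ statement into the elementwise decomposition, and I expect this to be the main obstacle. Here I would specialize $Z$ to rank-one choices $Z=z\,e_b^{\mathsf{T}}$ with $z\in\stable^{\outd\times1}$, so that the requirement factors through the columns of $\Pi$ and the rows of $\Cont\Pi\Gen_r$, and then use invertibility of $\Pi$ over $\fractions{\stable}$ together with the integral-domain structure to isolate, for each pair $(i,j)$, the contribution of $\gen_{ij}$. Combining this with the identity $\Gen_r=(I-\Plant\Cont)R$, where $R:=\Pi\Gen_r\in\matrices{\stable}$, rewrites $\gen_{ij}I=R_{ij}I-(\Plant\Cont R)_{ij}I$; the first summand is stable, and the stability-for-all-$Z$ condition is precisely what is needed to place the second summand in $\matrices{\stable}+\matrices{\stable}\,\Cont$, yielding the stable matrices $A_{ij}$ and $B_{ij}$. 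This is the entrywise form of the fractional-ideal bookkeeping behind the SISO inclusion $\langle\Gen_r\rangle\subseteq\langle 1,\Cont\rangle$.

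Two technical points will need care. First, the parameter $X$ in \eqref{eqn:AllStabilizedPlants} is admissible only under the determinant conditions of Theorem \ref{thm:Stability}(2), which for our reduced parameter both amount to $\det(I+Z\Cont\Pi)\neq0$; a given $Z$ may violate this. Since $\stable$ is an integral domain, $\det(I+Z\Cont\Pi)$ is a nonzero element of $\fractions{\stable}$ for generic $Z$, so I would replace an offending $Z$ by $Z+\lambda Z'$ with $\lambda\in\stable$ and a suitable $Z'$ restoring invertibility, and recover the conclusion for the original $Z$ by linearity of $\Pi Z\Cont\Pi\Gen_r$ in $Z$. Second, reassembling the rank-one specializations into a single pair $(A_{ij},B_{ij})$ must respect the MIMO structure; this is where the full strength of the hypothesis enters, and it is the step most likely to require an auxiliary observation about the $\stable$-module generated by the entries of $\Cont$.
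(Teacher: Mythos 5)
Your first half coincides with the paper's: using Theorem \ref{thm:Stability}(2) to write $(I-\Plant(X)\Cont)\inv\Gen_r=\Pi\Gen_r+\widetilde{M}XM\Cont\,\Gen_r$ with $\Pi=(I-\Plant\Cont)\inv$, and subtracting the $X=0$ case to conclude $\widetilde{M}XM\Cont\,\Gen_r\in\matrices{\stable}$ for all admissible $X$, is exactly how the paper begins. The gap is in how you then exploit the freedom in $X$. Writing $\widetilde{M}XM\Cont=\Pi Z\Cont\Pi$ and letting $Z$ range over $\stable^{\odxid}$ via the lower-left block of $X$ reduces the hypothesis to ``$\Pi Z\Cont\Pi\Gen_r\in\matrices{\stable}$ for all stable $Z$,'' i.e.\ to the statement that every product of an entry of $\Pi$ with an entry of the single fixed matrix $\Cont\Pi\Gen_r$ is stable. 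The individual $\gen_{ij}$ never appear in isolation here, and the rescue you propose --- rank-one $Z$ together with ``invertibility of $\Pi$ over $\fractions{\stable}$'' --- cannot work in a general integral domain: $\det\Pi\neq 0$ does not license division inside $\stable$ (over $\stable=\Z$ one has $2\cdot\tfrac{1}{2}\in\Z$ while $\tfrac{1}{2}\notin\Z$), and the entries of $\Pi$ need not generate the unit ideal. The step you yourself flag as ``most likely to require an auxiliary observation'' is therefore a genuine hole, not a technicality.

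The fix is to place the free parameter in the blocks of $X$ that are multiplied by $\Plant$ on the right, which is what the paper does. Taking $X$ to vanish except for its lower-right block $X_0$ gives $\widetilde{M}XM\Cont=\Pi X_0\Plant\Cont\Pi=\Pi X_0(\Pi-I)$, because $\Plant\Cont\Pi=\Pi-I$; since $\Pi X_0\Pi\Gen_r$ is stable (as $\Pi\Gen_r$ is), one concludes $\Pi X_0\Gen_r\in\matrices{\stable}$ for every stable $X_0$, and rank-one choices of $X_0$ now yield $\gen_{ij}\Pi\in\matrices{\stable}$ for each $i,j$ with no division whatsoever. The analogous choice in the upper-right block, using that $\Pi\Plant$ is a stable block of $\Closedloop{\Plant,\Cont}$, gives $\gen_{ij}\Pi\Plant\in\matrices{\stable}$, and the lemma follows from $\gen_{ij}I=\gen_{ij}\Pi(I-\Plant\Cont)=\gen_{ij}\Pi-\left(\gen_{ij}\Pi\Plant\right)\Cont$. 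Your closing identity $\Gen_r=(I-\Plant\Cont)\Pi\Gen_r$ is this same cancellation applied on the side where it does not help: it leaves the term $(\Plant\Cont\,\Pi\Gen_r)_{ij}$, which your reduced stability condition does not control. (Your remark about the determinant conditions on admissible $X$ is legitimate and applies equally to the paper's own argument; it can be handled by the linearity-in-$X$ perturbation you describe.)
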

\begin{proof}

Assume that $\Cont$ robustly regulates $\Plant$. First it is shown that $\theta_{ij}(I-\Plant\Cont)\inv \in\stable$. For the rest of the proof the notation $\widetilde{M}_1=(I-\Plant\Cont)\inv$ is used. The matrix $\widetilde{M}_1$ is stable, since $\Cont$ stabilizes $\Plant$. Since $\Cont$ regulates all the plants it stabilizes, the second item of Theorem \ref{thm:Stability} implies that
\begin{align}
& (I-\Plant(X)\Cont)\inv \Gen_r\notag\\
& \qquad =\left(\widetilde{M}_1 +\widetilde{M}X M\Cont\right.\notag\\
& \qquad\;\;\; \left.-\left(\widetilde{M}_1\inv \Plant+\widetilde{M}X M\right)\Cont\right)\inv\left(\widetilde{M}_1 +\widetilde{M}X M\Cont\right)\Gen_r\notag\\
& \qquad= \left(\widetilde{M}_1 -\widetilde{M}_1 \Plant\Cont\right)\inv \left(\widetilde{M}_1 +\widetilde{M}X M\Cont\right)\Gen_r\notag
\\
& \qquad= \widetilde{M}_1\Gen_r +\widetilde{M}X M\Cont\Gen_r\in\matrices{\stable},\label{eq:RegStabPert}
\end{align}
where $X$ is an arbitrary matrix of suitable dimension and $\widetilde{M}$ and $M$ are given by \eqref{eq:ErrorTM} and \eqref{eq:ErrorTM0}, respectively. Choosing $X=0$ yields $\widetilde{W}_1\Gen_r\in\matrices{\stable}$. This and \eqref{eq:RegStabPert} imply that $\widetilde{M}X M\Cont\Gen_r\in\matrices{\stable}$. In particular,
\begin{align*}
& \widetilde{M}\begin{bmatrix}
0 & 0\\
0 & X_0
\end{bmatrix} M\Cont\Gen_r\\
&\qquad  = 
\begin{bmatrix}
-\widetilde{M}_1\Plant & \widetilde{M}_1
\end{bmatrix}
\begin{bmatrix}
0 & 0\\
0 & X_0
\end{bmatrix}
\begin{bmatrix}
(I-\Cont\Plant)\inv \\ \Plant(I-\Cont\Plant)\inv
\end{bmatrix}
\Cont\Gen_r\\
&\qquad = \widetilde{M}_1 X_0 \Plant(I-\Cont\Plant)\inv\Cont\Gen_r\\
&\qquad = \widetilde{M}_1 X_0 (I-\Plant\Cont)\inv\Plant\Cont\Gen_r\in\matrices{\stable}\\
&\qquad = \widetilde{M}_1 X_0 (\widetilde{M}_1-I)\Gen_r\in\matrices{\stable}.
\end{align*}
Since $\widetilde{M}_1\Gen_r\in\matrices{\stable}$, it follows that
$\widetilde{M}_1X_0\Gen_r\in\matrices{\stable}$ for an arbitrary matrix $X_0$. Letting $X_0$ vary over all matrices of appropriate size shows that
\begin{align*}
\gen_{ij}\widetilde{M}_1=\gen_{ij}(I-\Plant\Cont)\inv\in\matrices{\stable}.
\end{align*}
Similar arguments show that
\begin{align*}
\gen_{ij}(I-\Plant\Cont)\inv\Plant \in\matrices{\stable}.
\end{align*}
The proof is completed by choosing the stable matrices $A_{ij}=\gen_{ij}(I-\Plant\Cont)\inv$ and $B_{ij}=\gen_{ij}(I-\Plant\Cont)\inv\Plant$ and observing that
\begin{align*}
\gen_{ij} I=\gen_{ij} (I-\Plant\Cont)\inv(I-\Plant\Cont)=A_{ij}+B_{ij}\Cont.
\end{align*}\hfill$\square$
\end{proof}

Theorem \ref{thm:IMP} shows that the instability implied by any element $\gen_{ij}$ of the signal generator must be built into every element of a robustly regulating controller. This is the general formulation of the internal model principle. Checking the condition \eqref{eqn:RORPSolvability} for every $\gen_{ij}$ separately is not always needed. The overall instability captured by all the elements of $\Gen_r$ is often characterized by a smaller set of elements. The following corollary makes this statement precise.

\begin{corollary}\label{cor:IMP2}
Denote $\Gen_r=(\gen_{ij})$ and let $\Cont$ stabilize $\Plant$. Consider the fractional ideal $I=\langle \gen_{ij} | 1\leq i\leq n, 1\leq j\leq q\rangle$.
\begin{enumerate}
\item If $I\subseteq \langle f_1,\ldots,f_k\rangle$ and there exist $A_l$ and $B_l$ such that $f_l I=A_l+B_l \Cont$ for all $l=1,\ldots, k$, then $\Cont$ is robustly regulating. 
\item If $\langle f_1,\ldots,f_k\rangle\subseteq I$ and $\Cont$ is robustly regulating, then there exist $A_l$ and $B_l$ such that $f_l I=A_l+B_l \Cont$ for all $l=1,\ldots, k$.
\end{enumerate}
\end{corollary}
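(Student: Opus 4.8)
The plan is to reduce both items to Theorem \ref{thm:IMP} by recognizing that the internal model condition \eqref{eqn:RORPSolvability} is preserved under the $\stable$-module operations. Fix the stabilizing controller $\Cont$ and introduce the set
\begin{align*}
\mathcal{J}:=\Bdefinedset{f\in\fractions{\stable}}{A+B\Cont=fI\text{ for some }A,B\in\matrices{\stable}},
\end{align*}
where $I$ denotes the identity matrix; thus $f\in\mathcal{J}$ means precisely that $f$ admits the internal model representation of \eqref{eqn:RORPSolvability}. With this notation Theorem \ref{thm:IMP} (equivalently, Lemmas \ref{lem:IMP1} and \ref{lem:IMP2}) states that the stabilizing $\Cont$ is robustly regulating if and only if $\gen_{ij}\in\mathcal{J}$ for every $i$ and $j$.

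The crucial step is to verify that $\mathcal{J}$ is an $\stable$-submodule of $\fractions{\stable}$. If $fI=A_f+B_f\Cont$ and $gI=A_g+B_g\Cont$ with all four matrices stable, then $(f+g)I=(A_f+A_g)+(B_f+B_g)\Cont$ shows $f+g\in\mathcal{J}$, and for $r\in\stable$ the identity $(rf)I=(rA_f)+(rB_f)\Cont$ shows $rf\in\mathcal{J}$, since $rA_f$ and $rB_f$ remain stable. Consequently a finite set of elements lies in $\mathcal{J}$ if and only if the whole $\stable$-module it generates does. In particular, $\gen_{ij}\in\mathcal{J}$ for all $i,j$ is equivalent to the single containment $\langle\gen_{ij}\mid 1\leq i\leq n,\ 1\leq j\leq q\rangle\subseteq\mathcal{J}$, so robust regulation of $\Cont$ is equivalent to $I\subseteq\mathcal{J}$, where $I$ is the fractional ideal of the statement.

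Both items then follow by chaining inclusions. For the first item, the hypothesis $f_lI=A_l+B_l\Cont$ says $f_l\in\mathcal{J}$ for each $l$, hence $\langle f_1,\ldots,f_k\rangle\subseteq\mathcal{J}$ because $\mathcal{J}$ is a module; combined with $I\subseteq\langle f_1,\ldots,f_k\rangle$ this yields $I\subseteq\mathcal{J}$, and therefore $\Cont$ is robustly regulating. For the second item, robust regulation gives $I\subseteq\mathcal{J}$, so $\langle f_1,\ldots,f_k\rangle\subseteq I\subseteq\mathcal{J}$, whence each $f_l\in\mathcal{J}$, i.e.\ the required stable $A_l,B_l$ exist. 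The only genuine content is the module-closure verification of the second paragraph; once that is in place the statement is a formal manipulation of submodule inclusions, and the main thing to keep track of is the notational clash between the fractional ideal $I$ and the identity matrix $I$ appearing in the defining relation of $\mathcal{J}$.
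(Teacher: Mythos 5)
Your proof is correct and is essentially the paper's argument in a slightly more abstract packaging: the verification that $\mathcal{J}$ is an $\stable$-submodule is exactly the computation $\gen_{ij}I=\sum_l a_l f_l I=\bigl(\sum_l a_l A_l\bigr)+\bigl(\sum_l a_l B_l\bigr)\Cont$ that the paper carries out for the first item. The only difference is organizational — your submodule formulation lets both items fall out of the same inclusion chain, whereas the paper proves the first item directly and notes the second is similar.
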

\begin{proof}
Only the first item is shown. The second item can be shown similarly. It is assumed that $I\subseteq \langle f_1,\ldots,f_k\rangle$ and that there exist $A_l$ and $B_l$ such that $f_l I=A_l+B_l \Cont$ for all $l=1,\ldots, k$. Now $\gen_{ij}\in \langle f_1,\ldots,f_k\rangle$ or equivalently
\begin{align*}
\gen_{ij}=a_1 f_1+\cdots +a_k f_k
\end{align*}
for some $a_1,\ldots, a_k\in \stable$. Consequently
\begin{align*}
\gen_{ij} I & =\sum_{l=1}^k a_l f_l I\\
& = \sum_{l=1}^k a_l(A_l+B_l \Cont)\\
& =\left(\sum_{l=1}^k a_l A_l\right)+\left(\sum_{l=1}^{k}a_l B_l\right)\Cont.
\end{align*}
Since $\gen_{ij}$ is an arbitrary element of $\Gen_r$, the result follows by Theorem \ref{thm:IMP}.\hfill$\square$
\end{proof}

The above corollary shows that the instability generated by $\Gen_r=(\gen_{ij})$ is captured by the fractional ideal $I$ generated by the elements $\gen_{ij}$. In particular, if $I$ is principal, i.e. there exists an element $\gen\in\fractions{\stable}$ such that $I=\langle\gen\rangle$, then a stabilizing controller is robustly regulating if and only if there exist stable $A$ and $B$ such that
\begin{align*}
\gen I=A+B\Cont.
\end{align*}
Every finitely generated fractional ideal of $\fractions{\stable}$ is principal if and only if $\stable$ is a Bezout domain. Thus, if $\stable$ is a Bezout domain the internal model to be built into a robustly regulating controller is characterized by a single element of $\fractions{\stable}$.

\begin{exa}\label{exa:example1}
The set of all rational functions with complex coefficients that are bounded at infinity and whose poles all have negative real parts is a principal ideal domain, and consequently a Bezout domain. Thus, the internal model is always captured by a single rational function in the field of fractions.

Other common rings in systems theory, e.g. the Hardy space $\Hinfty$ of bounded holomorphic functions in the right half plane $\C_+=\{s\in\C\, |\, \mathrm{Re}(s)>0 \}$ and the convolution algebra $\mathcal{A}(\beta)$ presented by \cite{CallierDesoer1978}, are not typically Bezout. Then there are signal generators for which the instability it generates is not captured by any single fraction over the ring.
\end{exa}

\begin{theorem}\label{thm:IMPClassic}
Let $\Cont$ stabilize $\Plant$ and assume that the fractional ideal $I=\langle \gen_{ij} | 1\leq i\leq n, 1\leq j\leq q\rangle$ generated by the elements of $\Gen_r=(\gen_{ij})$ is principal with the generator $\gen\in\fractions{\stable}$. If $\gen=\frac{n}{d}$ is a coprime factorization, then $\Cont$ is robustly regulating if and only if
there exist stable $A_0$ and $B_0$ such that
\begin{align}\label{eq:IMPClassic1}
d\inv I=A_0+B_0\Cont.
\end{align}
If in addition $\Cont$ has a right coprime factorization $\Cont=N D^{-1}$,
then it is robustly regulating if and only if $D=d D_0$ for some $D_0\in\matrices{\stable}$.
\end{theorem}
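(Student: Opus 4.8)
\section*{Proof plan}

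The plan is to reduce the statement to the principal-ideal form of the internal model principle and then to translate between the three conditions using the two coprimeness relations available. Since $I = \langle\gen\rangle$ is principal, the remark following Corollary~\ref{cor:IMP2} gives that $\Cont$ (which stabilizes $\Plant$ by hypothesis) is robustly regulating if and only if there exist stable $A, B$ with $\gen I = A + B\Cont$; here, as throughout the displayed equations, $I$ denotes the identity matrix while in the hypothesis it names the fractional ideal. I would take this equivalence as the base case and build both claims on top of it.

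For the first claim I would establish that $\gen I = A + B\Cont$ is solvable over $\stable$ if and only if $d\inv I = A_0 + B_0\Cont$ is, using the coprimeness of $n$ and $d$ in $\stable$, i.e.\ $xn + yd = 1$ for some $x, y \in \stable$, together with the fact that the scalar $d$ commutes with every matrix. Writing $\gen = nd\inv$, multiplication by $n$ sends a solution $(A_0, B_0)$ of the $d\inv$-equation to the solution $(nA_0, nB_0)$ of the $\gen$-equation. Conversely the Bezout relation gives $d\inv = x\gen + y$, whence $d\inv I = x(\gen I) + yI = (xA + yI) + (xB)\Cont$ exhibits the stable solution $(A_0, B_0) = (xA + yI, xB)$. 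This proves \eqref{eq:IMPClassic1}.

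For the second claim I would bring in the right coprime factorization $\Cont = ND\inv$ with Bezout identity $XN + YD = I$. If $\Cont$ is robustly regulating, then \eqref{eq:IMPClassic1} holds; right-multiplying by $D$ and using $\Cont D = N$ yields $d\inv D = A_0 D + B_0 N \in \matrices{\stable}$, so $D = d\,(d\inv D) = dD_0$ with $D_0 := d\inv D$ stable. Conversely, assuming $D = dD_0$ with $D_0$ stable, I would insert the Bezout identity into $d\inv D = D_0$ to obtain $D_0 = D_0(XN + YD) = (D_0 X)N + (D_0 Y)D$; right-multiplying by $D\inv$ gives $d\inv I = (D_0 Y) + (D_0 X)\Cont$, a stable solution of \eqref{eq:IMPClassic1}, so $\Cont$ is robustly regulating by the first claim.

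The arguments are short matrix manipulations, so I do not expect a genuine obstacle; the points requiring attention are organizational. One must keep the two Bezout relations in their proper roles---the scalar identity $xn + yd = 1$ powers the first claim, the matrix identity $XN + YD = I$ powers the second---and must remember that every freshly constructed coefficient ($nA_0$, $xA + yI$, $D_0 X$, $D_0 Y$, $D_0$ itself) lies in $\matrices{\stable}$ precisely because $x, y, n$ are stable and $X, Y, N, D$ are stable by the coprime factorizations. Tracking the overloaded symbol $I$ across its two meanings is the only thing likely to cause slips.
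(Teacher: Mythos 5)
Your proposal is correct and follows essentially the same route as the paper's own proof: the reduction to the principal-generator condition $\gen I=A+B\Cont$ via Corollary~\ref{cor:IMP2}, the scalar Bezout identity $nx+dy=1$ to pass between $\gen$ and $d\inv$, and the matrix Bezout identity $XN+YD=I$ together with right-multiplication by $D$ for the factorization claim are exactly the steps used there. No gaps.
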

\begin{proof}
First it is show that $\Cont$ is robustly regulating if and only if \eqref{eq:IMPClassic1} holds. Corollary \ref{cor:IMP2} implies that $\Cont$ is robustly regulating if and only if for some stable $A$ and $B$
\begin{align}\label{eq:IMPClassic2}
\gen I=A+B\Cont.
\end{align}
Multiplying both sides of \eqref{eq:IMPClassic1} by $n$ shows that \eqref{eq:IMPClassic1} implies \eqref{eq:IMPClassic2}, so it remains to show that \eqref{eq:IMPClassic2} implies \eqref{eq:IMPClassic1}. Since $\gen=\frac{n}{d}$ is a coprime factorization there exist $x,y\in\stable$ such that $nx+dy=1$. By using \eqref{eq:IMPClassic2}, one gets
\begin{align*}
d\inv I & =\frac{nx+dy}{d}I \\
& =x\gen I+ yI\\
& =x(A+B\Cont)+yI\\
&=(xA+yI)+(xB)\Cont.
\end{align*}
The remaining part of the theorem is shown by proving that \eqref{eq:IMPClassic1} is equivalent to that $D=dD_0$ for some $D_0\in\matrices{\stable}$. Since it is now assumed that $\Cont=N D^{-1}$ is a coprime factorization, there exist $X,Y\in\matrices{\stable}$ such that $XN+YD=I$. If \eqref{eq:IMPClassic1} holds, then
\begin{align*}
d\inv D=(A_0+B_0\Cont)D=A_0 D+B_0 N:=D_0\in\matrices{\stable},
\end{align*}
or equivalently $D=dD_0$. On the other hand, if $D=d D_0$, then
\begin{align*}
d\inv I=D_0 D\inv=D_0(XN+YD)D\inv=D_0 Y+D_0 X \Cont,
\end{align*}
which completes the proof. \hfill$\square$
\end{proof}

If $I=\langle \gen_{ij} | 1\leq i\leq n, 1\leq j\leq q\rangle$ is principal and its generator has a coprime factorization $\gen=\frac{n}{d}$, then the internal model to be build into a robustly regulating controller is the stable element $d$ by the above theorem. It can be shown using Corollary \ref{cor:IMP2} that $d$ is unique up to multiplication by a unit. In this sense, one has a minimal internal model. By the first item of Corollary \ref{cor:IMP2}, one may choose $d$ to be the internal model even if $n$ and $d$ are not coprime. However, then $d$ is not minimal, since $d\inv$ produces stronger instability than $\Gen_r$ is able to generate, or in other words $I\subsetneq\langle d\inv\rangle$.

Furthermore, $d$ must divide all elements of the denominator of a coprime factorization of the controller, provided that it exists. By Theorems 7.8 and 7.9 of \cite{Lang2002}, $d$ actually is the largest invariant factor of the denominator $D$ of the coprime factorization of $\Gen_r$. This shows that Theorem \ref{thm:IMPClassic} corresponds to Lemma 7.5.8 of \cite{Vidyasagar}, i.e. Theorem \ref{thm:IMP} is a reformulation of the classical internal model principle.

\begin{exa}
It is now assumed that $\stable$ is the set of all rational functions with complex coefficients that are bounded at infinity and whose poles all have negative real parts. Consider the stable plant
\begin{align*}
\Plant(s)=\begin{bmatrix}
 \frac{2}{s+1} & \frac{1}{(2s+1)(s+1)} \\
\frac{1}{(s+1)^2} & \frac{1}{s+1}
\end{bmatrix}
\end{align*}
which is the transfer function matrix of the linearized plant of a quadruple tank laboratory process presented by \cite{Johansson2000}.

Next it is shown that the controller
\begin{align*}
\Cont(s)=\begin{bmatrix}
-\frac{4s^2 + 2s + 2}{s(s^2+1)} &  -\frac{4s^2 + 3s + 5}{5s(s^2+1)} \\
-\frac{s^2 + s + 1}{s(s^2+1)} & -\frac{2s^2 + s + 1}{s(s^2+1)}
\end{bmatrix}
\end{align*}
solves the robust regulation problem for the plant $\Plant(s)$ and the signal generator
\begin{align*}
\Gen_r(s)=\begin{bmatrix}
\frac{1}{s} & -\frac{1}{s^2-1}\\
\frac{1}{s} & \frac{s+2}{s+1}
\end{bmatrix}.	
\end{align*}
A straightforward calculation shows that $\Cont(s)$ stabilizes $\Plant(s)$. In order to verify that the controller contains an internal model, note that
\begin{align*}
& \frac{1}{s}=\frac{(s+1)^3}{s(s^2-1)}\underbrace{\frac{s^2-1}{(s+1)^3}}_{\in\stable},\\ 
& \frac{1}{s^2-1}=\frac{(s+1)^3}{s(s^2-1)}\underbrace{\frac{s}{(s+1)^3}}_{\in\stable},\\
& \frac{s+2}{s+1}=\frac{(s+1)^3}{s(s^2-1)}\underbrace{\frac{s(s-1)(s+2)}{(s+1)^4}}_{\in\stable},\; \text{and}\\
&
\frac{(s+1)^3}{s(s^2-1)}=\frac{1}{s}\underbrace{\frac{2s+1}{s+1}}_{\in\stable}+\frac{1}{s^2+1}\underbrace{\frac{4s}{s+1}}_{\in\stable}+\frac{s+2}{s+1}.
\end{align*}
This means that the fractional ideal generated by the elements of $\Gen_r$ has generator $\gen(s)=\frac{(s+1)^3}{s(s^2+1)}$. Since
\begin{align*}
\gen I & =\underbrace{\gen(s)(I-\Plant(s)\Cont(s))\inv}_{\in\stable}\\
& \qquad +\underbrace{\gen(s)(I-\Plant(s)\Cont(s))\inv\Plant(s)}_{\in\stable} \Cont(s),
\end{align*}
Corollary \ref{cor:IMP2} shows that the controller is robustly regulating.

Note that $\gen\inv\in\stable$, so $1/\gen\inv$ is a coprime factorization of $\gen$. The controller has the right coprime factorization
\begin{align*}
\Cont(s)& =\begin{bmatrix}
-\frac{4s^2 + 2s + 2}{(s+1)^3} &  -\frac{4s^2 + 3s + 5}{5(s+1)^3} \\
-\frac{s^2 + s + 1}{(s+1)^3} & -\frac{2s^2 + s + 1}{(s+1)^3}
\end{bmatrix}\\
& \qquad
\times\begin{bmatrix}
\frac{s(s^2+1)}{(s+1)^3} &  0 \\
0 & \frac{s(s^2+1)}{(s+1)^3}
\end{bmatrix}\inv,
\end{align*}
and as suggested by Theorem 3.5
\begin{align*}
\gen(s)\begin{bmatrix}
\frac{s(s^2+1)}{(s+1)^3} &  0 \\
0 & \frac{s(s^2+1)}{(s+1)^3}
\end{bmatrix}\in\matrices{\stable}.
\end{align*}
\end{exa}


\section{Concluding Remarks}\label{sec:Conclusions}

A new formulation of the classical internal model principle was given as the main result of this paper. It generalizes the classical formulation to non-Bezout integral domains and the SISO formulation by \cite{LaakkonenQuadrat2015} to MIMO plants. The fractional representation approach was used to formulate the internal model principle. Alternative algebraic approaches have a great potential to provide new insights into the robust regulation, see for example \cite{LaakkonenQuadrat2015}. Prominent frameworks for studying robust regulation are the lattice approach by \cite{Quadrat2006} and the geometric systems theory (\cite{Falb1999}) among others, and future research includes finding new formulations of the internal model principle using these frameworks.


\bibliographystyle{plain} 

\end{document}